\documentclass[11pt]{amsart}
 \usepackage[margin=3.5cm]{geometry}
\numberwithin{equation}{section}
\usepackage{amssymb}
\usepackage{amsmath, amsfonts,amsthm,amssymb,amscd, verbatim,graphicx,color,multirow,booktabs, caption,tikz,tikz-cd, mathdots,bm}
\usepackage{tikz-cd}
 \usepackage{hyperref} 
\usetikzlibrary{positioning}

\newtheorem{theorem}{Theorem}[section]
\newtheorem{lemma}[theorem]{Lemma}

     \newtheorem*{question}{Question}

      \theoremstyle{definition}

     \theoremstyle{remark}
     \newtheorem{remark}[theorem]{Remark}

\newcommand{\Sym}{\mathop{\mathrm{Sym}}}

\newcommand{\GL}{\mathop{\mathrm{GL}}}
\newcommand{\F}{\mathop{\bf{F}}}

\newcommand{\C}{\mathop{\bf{C}}}
\newcommand{\N}{\mathop{\bf{N}}}

 \definecolor{mycolor}{rgb}{0.55,0.0,0.16}
  \definecolor{myred}{rgb}{0.6,0.0,0.16}
  \definecolor{mygreen}{rgb}{0.0,0.6,0.16}
  \definecolor{myviolet}{rgb}{1,0,1}

\hypersetup{
colorlinks=true,
linkcolor=true,
linktocpage=true,
pageanchor=true,
hyperindex=true
}

 \AtBeginDocument{
     \hypersetup{
  linkcolor=myred,
  urlcolor=mycolor,
citecolor=mygreen
}
     }

\makeatletter
\@namedef{subjclassname@2020}{%
  \textup{2020} Mathematics Subject Classification}
\makeatother

\subjclass[2020]{Primary: 20B15}  
\keywords{Primitive solvable groups, derived length}

\author[Francesca Lisi and Luca Sabatini]{Francesca Lisi and Luca Sabatini}

\address{Francesca Lisi, University of Firenze} 
\email{francesca.lisi@postecert.it}

\address{Luca Sabatini, University of Warwick} 
\email{luca.sabatini@warwick.ac.uk, sabatini.math@gmail.com}

\begin{document} 
\title[Fixing two points in primitive solvable groups]{Fixing two points\\in primitive solvable groups} 

\maketitle 

\begin{abstract} 
Consider a finite primitive solvable group.
We observe that a result of Y. Yang implies that there exist two points whose pointwise stabilizer has derived length at most $9$.
We show that, if the group has odd cardinality, then there exist two points whose pointwise stabilizer is abelian.
\end{abstract}

\vspace{0.5cm}
\section{Introduction} 

Let $\Omega$ be a finite set and let $G$ be a primitive solvable group over $\Omega$.
By a beautiful theorem of Seress \cite{Ser96}, there always exist four points of $\Omega$ such that their pointwise stabilizer is trivial.
When the group has odd cardinality, three points are sufficient to obtain the same result.
These results are sharp, and in the present paper we ask what can be said when only two points are fixed.

Our beginning is in the observation that a theorem of Moret\'o and Wolf \cite{MW04}[Theorem E] implies the following:
there exist $x,y \in \Omega$ such that the pointwise stabilizer $G_{x,y}$ lies in $\F_{_9}(G_x)$,
the ninth Fitting subgroup of $G_x$, and in $\F_{_2}(G_x)$ when $G$ has odd cardinality.
In particular, the Fitting length of the stabilizer of two points can be bounded by $9$, or $2$, respectively.
These statements have the convenience of saying where $G_{x,y}$ is located with respect to $G_x$,
but are far from the actual truth when we are interested in structural properties of the stabilizer of two points.

In his paper \cite{Yan09}, Y. Yang has refined Moret\'o and Wolf's theorem in two directions:
in fact his results imply that the Fitting length of $G_{x,y}$ can be bounded by $7$, and the derived length by $9$.
We will prove that, when $|G|$ is odd, the stabilizer of two points can be chosen to be abelian.
We summarize these results as follows:

\begin{theorem} \label{th:main}
Let $G$ be a primitive solvable group.
Then there exist two points whose pointwise stabilizer has derived length at most $9$.
If $|G|$ is odd, then there exist two points whose pointwise stabilizer is abelian.
\end{theorem}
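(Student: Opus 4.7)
\medskip\noindent\textbf{Proof plan.}
By the standard reduction for primitive solvable groups, the unique minimal normal subgroup $V$ of $G$ is elementary abelian of prime-power order and acts regularly on $\Omega$, so we may identify $\Omega$ with $V = \F_p^n$ and write $G = V \rtimes H$, where $H \leq \GL(V)$ is a solvable irreducible subgroup acting faithfully on $V$. Choosing $x = 0 \in V$, we have $G_x = H$, and for every $v \in V$ the two-point stabilizer $G_{x,v}$ is exactly the linear stabilizer $H_v$. The first assertion is already contained in Yang's results recalled in the introduction, which furnish a $v \in V$ with $H_v$ of derived length at most $9$; the task is thus to prove the odd-order statement, so from now on assume that $p$ and $|H|$ are both odd.

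The key observation in the odd case is the following reduction: it is enough to produce $v \in V$ lying in a regular orbit of the derived subgroup $H'$ on $V$. Indeed, for such a $v$ one has $H_v \cap H' \leq H'_v = 1$, so the natural map $H_v \to H/H'$ is injective and $H_v$ is abelian because the quotient is. In this way the whole problem collapses to showing that $H'$, which is solvable of odd order and acts faithfully on $V$, admits a regular orbit.

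Establishing this regular orbit is, in my view, the main obstacle. The natural tool is the theorem of Espuelas, refined later by Yang, asserting that an odd-order solvable group acting faithfully and completely reducibly on a finite module has a regular orbit. Two technical subtleties must be handled. First, even though $H$ is irreducible, the subgroup $H'$ may act reducibly on $V$. Second, the coprimality hypothesis fails when $p$ divides $|H'|$. Both are addressed via the structure theory of irreducible solvable subgroups of $\GL(V)$ (Suprunenko--Short): irreducibility of $H$ forces $O_p(H) = 1$, the Fitting subgroup $F(H)$ is a central product of a cyclic group and extraspecial groups of order coprime to $p$, and $H/F(H)$ embeds in a symplectic-type quotient. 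Combining this structural information with the regular-orbit results for odd-order solvable groups, and using the rigidity imposed by oddness to rule out the small exceptional configurations in which a regular orbit could fail, one produces the required $v$ and completes the proof.
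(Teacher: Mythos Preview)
Your reduction is correct up to the point where you claim it suffices to find a regular $H'$-orbit on $V$: if $(H')_v=1$ then $H_v\cap H'=1$ and $H_v$ embeds in the abelian group $H/H'$. That is a clean observation. The gap is in the next step.

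The statement you attribute to Espuelas --- that an odd-order solvable group acting faithfully and completely reducibly on a finite module of odd order always has a regular orbit --- is false. Take $H=C_{13}\rtimes C_3\leqslant\Gamma(3^3)\leqslant\GL_3(3)$ acting on $V=\mathbb{F}_3^3$: the action is faithful and irreducible, $|H||V|$ is odd, but $|H|=39>26=|V\setminus\{0\}|$, so no regular orbit exists. What Espuelas actually proves (the Lemma~2.1 cited in the paper) is that in the \emph{quasiprimitive} odd case one has either a regular orbit or $\F(G)$ cyclic; the second alternative genuinely occurs. Your two ``technical subtleties'' are also mis-diagnosed: since $H'\trianglelefteq H$ and $V$ is irreducible for $H$, Clifford's theorem gives that $V|_{H'}$ is automatically completely reducible, so reducibility is never the obstacle; the only real obstruction is the failure of coprimality, and you do not explain how to overcome it. The closing sentence about ``ruling out small exceptional configurations'' is exactly where the argument would have to live, and it is not supplied.

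The paper does not go through $H'$ at all. It argues directly on $H$: in the quasiprimitive case it uses Espuelas' dichotomy and, when $\F(H)$ is cyclic, the semilinear embedding $H\leqslant\Gamma(q^m)$ to show every nonzero centralizer is cyclic; in the non-quasiprimitive case it inducts via Gluck's permutation lemma (odd-order transitive groups have a regular orbit on the power set), choosing coordinates $v_i$ in the homogeneous components so that $\C_G(v)\subseteq N$ and then intersecting the abelian-by-kernel centralizers with Lemma~\ref{lemGT}. Your regular-orbit-of-$H'$ idea might be salvageable, but as written it rests on a theorem that does not exist in the form you need.
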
 

The constant $9$ is probably not the best possible, and lowering this number seems to require a different method (see Remark \ref{remSharp}).
Theorem \ref{th:main} will be consequence of results concerning orbits of linear groups.
The size of such orbits have been studied intensively in the last three decades \cite{Esp91,DJ07,Dol08,Yan20},
often in connection with an influential conjecture of Gluck \cite{Glu85,CHMN15}.
The recent paper \cite{MBR19} contains a detailed summary of the known results,
and provides a strong theorem when $\Sym(4)$ is not involved as a section.

With the exception of \cite{Yan09},
we found that structural properties of the centralizers are much less studied.
We have the following:

\begin{theorem} \label{th:mod} 
Let $G$ be a solvable group and let $V$ be a finite completely reducible faithful $G$-module (possibly of mixed characteristic).
Then
\begin{itemize}
    \item[(i)] there exists $v \in V$ such that the derived length of $\C_G(v)$ is at most $9$;
    \item [(ii)] if $|V||G|$ is odd, there exists $v \in V$ such that $\C_G(v)$ is abelian.
\end{itemize}
\end{theorem}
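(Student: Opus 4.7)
The plan is to reduce Theorem \ref{th:mod} to the case where $V$ is an irreducible and in fact primitive $G$-module, where the conclusion can be extracted from Yang's refinement of Moret\'o--Wolf \cite{Yan09}.

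The passage from a completely reducible module to an irreducible one is a direct induction on the number of summands. Write $V = U \oplus W$ with $U$ irreducible, and let $K_U$, $K_W$ be the kernels of the $G$-actions on $U$ and $W$; faithfulness of $V$ gives $K_U \cap K_W = 1$. By induction, one finds $u \in U$ and $w \in W$ such that $\C_G(u)/K_U$ and $\C_G(w)/K_W$ satisfy the desired property. For $v := u + w$ we have $\C_G(v) = \C_G(u) \cap \C_G(w)$, hence in case (i)
\[
\C_G(v)^{(9)} \leq \C_G(u)^{(9)} \cap \C_G(w)^{(9)} \leq K_U \cap K_W = 1,
\]
and the analogous statement with the derived subgroup in place of the ninth derived subgroup handles case (ii). Because the computation is purely group-theoretic, this reduction covers the mixed characteristic case automatically, and is insensitive to the particular field over which each summand lives.

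In the irreducible case, a standard Clifford-theoretic descent reduces matters to the quasi-primitive, and then to the primitive, situation, a step which does not increase the derived length of any point centralizer. At this stage part (i) should follow from Yang's main result in \cite{Yan09}, which in the primitive case produces a vector whose centralizer has derived length at most $9$. For part (ii) one exploits the particularly clean structure of primitive solvable odd-order linear groups, combined with orbit-counting arguments in the style of \cite{MW04,Yan09}, to locate a vector $v$ with $\C_G(v)$ abelian.

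The main obstacle, I expect, is the abelianness conclusion in part (ii): the derived length bound of $9$ is essentially built into Yang's inequalities, but abelianness is a structural statement that has to be verified case by case inside the list of primitive solvable odd-order linear groups. The delicate part of the analysis should concern the odd extraspecial factors in the Fitting subgroup together with their symplectic normalizers, where fixing just one vector has to be shown to suffice to kill every commutator of the resulting stabilizer.
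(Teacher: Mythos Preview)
Your reduction from completely reducible to irreducible is correct and matches the paper exactly (this is the content of Lemma~\ref{lemGT} applied with $A=G$). Likewise, for part~(i) it is legitimate simply to cite Yang~\cite{Yan09}; the paper acknowledges this and only gives an alternative direct argument.

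The genuine gap is the sentence ``a standard Clifford-theoretic descent reduces matters to the quasi-primitive, and then to the primitive, situation, a step which does not increase the derived length of any point centralizer.'' This is not a formality, and as stated it is not true without further argument: if $V_N = V_1 \oplus \cdots \oplus V_m$ and $v = v_1 + \cdots + v_m$, then $\C_G(v)$ can permute the $V_i$'s nontrivially, so it need not sit inside $N$ and is not controlled by any single $\C_H(v_i)$. The whole point of the paper's proof is to \emph{choose} $v$ so that $\C_G(v) \subseteq N$, and this is where Gluck's permutation lemma (Lemma~\ref{lemGluck}) enters. In the odd case one uses a regular $S$-orbit on $\mathcal{P}(\{V_1,\dots,V_m\})$ together with the fact that no $v_i$ is conjugate to $-v_j$ (oddness is used here) to force the setwise stabilizer condition; only then can one apply Lemma~\ref{lemGT} with $A=N$ and the $K_i=\C_G(V_i)$.

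You also misidentify where the difficulty in~(ii) lies. The quasiprimitive odd case is not a delicate case analysis of extraspecial factors and symplectic normalizers: Espuelas's theorem \cite[Lemma~2.1]{Esp91} says that for $|V||G|$ odd either $G$ has a regular orbit (done) or $\F(G)$ is cyclic, and in the latter case Lemma~\ref{lemSemiL} embeds $G$ in a semilinear group, from which $\C_G(v)$ is cyclic for every nonzero $v$ by a two-line computation. The substance of the proof is the imprimitive step you dismissed as standard.
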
 

We prove Theorem \ref{th:mod}(ii) in the next section.
Theorem \ref{th:mod}(i) follows from the stronger result in \cite{Yan09}, but we will give a direct (and easier) proof below.
Since there are irreducible linear groups of any Fitting length, the Fitting length of the stabilizer of one point is not bounded by any constant
(and so is not the derived length).
Similarly, the cardinality of the stabilizer of two points is not bounded,
because there exist primitive solvable subgroups of $\Sym(n)$ of order roughly $24^{-1/3} n^{3.24}$ \cite{Pal82,Wol82},
while the index of such a stabilizer is bounded above by $n(n-1)$.

\vspace{0.1cm}
\section{Proofs} \label{secProofs} 

\subsection{Preliminaries} 

It is well known that a primitive solvable group over $\Omega$ is of the type $V \rtimes G$,
where $|V|=|\Omega|=p^n$ and $G$ is the stabilizer of a point.
Moreover, $V$ is a faithful irreducible $G$-module, and the stabilizers of two points correspond to the centralizers $\C_G(v)$ for some $v \in V$.
In particular, Theorem \ref{th:main} is equivalent to the irreducible case of Theorem \ref{th:mod}.

Let $G$ be a finite group and let $V$ be a completely reducible faithful $G$-module.
Recall that $V$ is quasiprimitive if $V_N$ is homogeneous for all normal subgroups $N$ of $G$.
If $q$ is a prime power and $m \geq 1$, the semilinear group $\mathrm{\Gamma}(q^m)$ is isomorphic to the semidirect product
$\mathrm{\Gamma}_0(q^m) \rtimes \mathrm{Gal}(\mathbb{F}_{q^m}/\mathbb{F}_q)$,
where $\mathrm{\Gamma}_0(q^m) \cong (\mathbb{F}_{q^m})^*$ and $\mathrm{Gal}(\mathbb{F}_{q^m}/\mathbb{F}_q) \cong C_m$ is cyclic.
As the next result shows, semilinear groups play an important role in the study of quasiprimitive groups.

\begin{lemma} \label{lemSemiL}
Let the solvable finite group $G$ act faithfully and quasiprimitively on a vector space $V$.
If $\F(G)$ is abelian, then $V=\mathbb{F}_{q^m}$ for some $q,m \geq 1$, $G \leqslant \mathrm{\Gamma}(q^m)$,
and $\F(G) \leqslant \mathrm{\Gamma}_0(q^m)$.
\end{lemma}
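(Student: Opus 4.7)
Write $p$ for the characteristic of $V$. The plan is to produce a field structure $\mathbb{F}_{q^m}$ on $V$ (with $q^m = |V|$) such that $G$ lies in the $\mathbb{F}_q$-semilinear group and $F := \F(G)$ sits inside the scalar subgroup. I work throughout under the additional assumption that $V$ is $G$-irreducible, which is the setting of the application to Theorem~\ref{th:main} and the case in which the stated conclusion ``$V=\mathbb{F}_{q^m}$'' is literally meaningful.

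First I would collect the structural facts about $F$. Solvability of $G$ gives $C_G(F) \leq F$, and abelianity of $F$ makes this an equality. By quasiprimitivity $V_F \cong W^k$ for some $F$-irreducible $W$, and the faithfulness of $V$ forces $F$ to act faithfully on $W$. Since $F$ is abelian, Schur's Lemma shows that $E := \mathrm{End}_F(W)$ is a finite field and that $F$ embeds as a cyclic subgroup of $E^*$; writing $F = \langle \phi \rangle$ and $E = \mathbb{F}_p[\phi] \subset \mathrm{End}_{\mathbb{F}_p}(V)$, the space $V$ becomes an $E$-vector space of dimension $k$ with $F$ acting by $E^*$-scalars. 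Because $G$ normalizes $F$ it normalizes $E$, acting on it by $\mathbb{F}_p$-algebra automorphisms; the resulting homomorphism $G \to \mathrm{Gal}(E/\mathbb{F}_p)$ has kernel $C_G(E) = C_G(F) = F$, so $G/F$ embeds as a cyclic subgroup of $\mathrm{Gal}(E/\mathbb{F}_p)$, and each $g \in G$ acts on $V$ as a $\sigma_g$-semilinear transformation over $E$.

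The heart of the argument is to show $k = 1$. Because $F$ generates $E$ as an $\mathbb{F}_p$-algebra, every $G$-invariant subspace of $V$ is automatically $E$-stable, so $V$ is irreducible as a module over the subring $A$ of $\mathrm{End}_{\mathbb{F}_p}(V)$ generated by $E$ and $G$. Setting $K := E^{G/F}$ and $d' := |G/F| = [E:K]$, this $A$ is the cyclic algebra $(E/K,\sigma,\gamma)$ attached to the cyclic extension $E/K$, the factor set $\gamma = \tau^{d'} \in K^*$ being given by any lift $\tau \in G$ of a generator of $G/F$. Since $K$ is a finite field, $\mathrm{Br}(K) = 0$ and so this cyclic algebra splits as $M_{d'}(K)$; its unique simple module has $K$-dimension $d'$, whereas $\dim_K V = k \cdot d'$. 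Comparing these dimensions forces $k = 1$.

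With $k = 1$ the conclusion is immediate: taking $q = p$ and $m := [E:\mathbb{F}_p]$ gives $V = E = \mathbb{F}_{q^m}$; the scalar action of $F$ places $F \leq \mathrm{\Gamma}_0(q^m) = E^*$, and each $g \in G$ acts on $V = E$ as $v \mapsto \alpha_g \sigma_g(v)$ for some $\alpha_g \in E^*$, so $G \leq \mathrm{\Gamma}(q^m)$. The main obstacle is the step $k = 1$; without the hypothesis that $F(G)$ is abelian, the analogous statement fails, and one encounters the extraspecial-type components of $F(G)$ familiar from the classification of primitive solvable linear groups.
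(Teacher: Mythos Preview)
Your argument is correct. The paper itself gives no self-contained proof, simply citing Corollary~2.3 and Theorem~2.10 of Manz--Wolf, where the conclusion is extracted from the general Clifford-theoretic structure theory of quasiprimitive solvable linear groups (every abelian normal subgroup is cyclic and acts by scalars over a suitable extension field, and when $\F(G)$ itself is abelian this forces the semilinear embedding). Your route is genuinely different and more conceptual: after identifying $E=\mathbb{F}_p[F]$ and embedding $G/F$ in $\mathrm{Gal}(E/\mathbb{F}_p)$, you recognise the image of $\mathbb{F}_p[G]$ in $\mathrm{End}(V)$ as the cyclic algebra $(E/K,\sigma,\gamma)$ over $K=E^{G/F}$; since cyclic algebras are central simple and $\mathrm{Br}(K)=0$ for finite $K$ (Wedderburn), this algebra is $M_{d'}(K)$, and comparing the $K$-dimension of its unique simple module with $\dim_K V = k\,d'$ yields $k=1$. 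This bypasses the case-by-case structural analysis in the textbook at the cost of invoking two standard facts from the theory of central simple algebras. Your added hypothesis that $V$ be $G$-irreducible is harmless: it is implicit in the cited Manz--Wolf statements and is the only case the paper ever uses.
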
 
\begin{proof}
This is a combination of Corollary 2.3 and Theorem 2.10 in \cite{MW93}.
\end{proof} 

If a group $S$ permutes a set $\Omega$, then $S$ also permutes the power set $\mathcal{P}(\Omega)$ of $\Omega$ in a natural way.
The next is \cite[Lemma 4.1]{MW04}.

\begin{lemma}[Gluck's permutation lemma] \label{lemGluck}
   Let $S$ be a transitive solvable permutation group on $\Omega$, $|\Omega|=m$.
   Then
    \begin{itemize}
        \item[(i)] if $|S|$ is odd, then $S$ has a regular orbit on $\mathcal{P}(\Omega)$;
        \item[(ii)] if $S$ is primitive and $m \geq 10$, then $S$ has at least $8$ regular orbits on $\mathcal{P}(\Omega)$
        and at least one regular orbit of subsets $\Delta$ of $\Omega$ such that $\Delta$ is not $S$-conjugate to its complement $\Omega \setminus \Delta$;
        \item[(iii)] if $S$ is primitive, then there exist subsets $\Delta_1,\ldots,\Delta_k$, $k \leq 4$,
        such that $\Omega$ is the disjoint union of the $\Delta_i$'s, and $\cap_i Stab_S(\Delta_i) =1$.
    \end{itemize}
\end{lemma}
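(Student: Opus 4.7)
The plan is to attack the three parts via a counting framework for (i) and (ii), supplemented by a parallel coloring-counting argument for (iii).

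For (i), I would invoke Burnside-type counting. A subset $\Delta \subseteq \Omega$ is fixed setwise by $g \in S$ iff $\Delta$ is a union of cycles of $g$, so $g$ fixes exactly $2^{c(g)}$ subsets, where $c(g)$ denotes the number of cycles of $g$ on $\Omega$. Hence the number of subsets with non-trivial stabilizer is bounded above by $\sum_{1 \neq g} 2^{c(g)}$, and a regular orbit exists as soon as this quantity is strictly less than $2^m$. The odd-order hypothesis enters via the observation that every $g \neq 1$ has odd order at least $3$, so at least one cycle has length $\geq 3$ and $c(g) \leq m-2$. To close the gap between this crude bound and the required strict inequality, I would average over Sylow $p$-subgroups of $S$, using transitivity to limit how many elements can realize $c(g)$ close to $m-2$.

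For (ii), the same framework is sharpened using the fact that a primitive solvable group of degree $m$ has the affine structure $V \rtimes H$ with $H$ faithful and irreducible on $V = \Omega$, and satisfies P\'alfy--Wolf-type order bounds $|S| \lesssim m^{3.25}$. A finer count of cycle structures, combined with $m \geq 10$, should yield $\sum_{g \neq 1} 2^{c(g)} \leq 2^m - 8|S|$, and hence at least $8$ regular orbits on $\mathcal{P}(\Omega)$. For the non-self-complementary condition, the involution $\Delta \mapsto \Omega \setminus \Delta$ commutes with the $S$-action, so regular orbits closed under complementation pair with their partners; the numerical surplus absorbs this reduction and leaves at least one regular orbit with $\Delta$ not $S$-conjugate to $\Omega \setminus \Delta$. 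The main obstacle I anticipate is here: securing the precise constant $8$ will likely require a case analysis separating small-degree affine groups (where the asymptotic bounds are not yet sharp) from the generic case.

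For (iii), I would rephrase the statement as a coloring problem: a partition of $\Omega$ into at most $4$ parts corresponds to a map $\chi: \Omega \to \{1,2,3,4\}$ (with possibly some values unused), and $\bigcap_i \mathrm{Stab}_S(\Delta_i)$ is exactly the subgroup of $S$ fixing every fiber of $\chi$. The number of $\chi$ preserved by a given $g \in S$ is $4^{c(g)}$ (one color per cycle), so the number of $\chi$ with trivial stabilizer is at least $4^m - \sum_{g \neq 1} 4^{c(g)}$. For primitive solvable $S$, the order bound $|S| \lesssim m^{3.25}$ together with control over cycle structures in affine groups makes this quantity positive once $m$ is not too small; the remaining finite list of small-degree primitive solvable groups is handled by direct enumeration.
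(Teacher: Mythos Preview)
The paper does not prove this lemma: it is quoted verbatim as \cite[Lemma 4.1]{MW04} (the line preceding the statement reads ``The next is \cite[Lemma 4.1]{MW04}''), so there is no in-paper argument to compare against. Your outline is in fact close in spirit to how the cited results are obtained in the literature (Gluck's original counting for (i), and the refinements by Seress and by Dolfi that feed into (ii) and (iii)), so in that sense your approach matches the \emph{source} of the lemma rather than diverging from it.

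That said, as written your plan for (i) has a real gap. From $c(g)\le m-2$ alone you only get $\sum_{g\neq 1}2^{c(g)}\le(|S|-1)\,2^{m-2}$, which is below $2^m$ only when $|S|\le 4$; for a transitive group of odd order this is essentially never the case. The phrase ``average over Sylow $p$-subgroups, using transitivity'' is not yet an argument: you need a mechanism that forces \emph{most} non-identity elements to have far fewer than $m-2$ cycles, not just at least one long cycle. Gluck's actual proof does this by restricting to elements of prime order $p$ (whose cycle type is forced to be a mix of $1$-cycles and $p$-cycles), bounding their fixed-point sets via the structure of a minimal normal subgroup, and then inducting; without some ingredient of that kind the counting will not close. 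The same issue recurs in your sketches for (ii) and (iii): the P\'alfy--Wolf bound $|S|\lesssim m^{3.25}$ is the right external input, but the inequality you need is $\sum_{g\neq 1}k^{c(g)}<k^m$ (with $k=2$ or $4$), and controlling the \emph{distribution} of $c(g)$, not just its maximum, is where the work lies.
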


We report the following observation, which makes our proof of Theorem \ref{th:mod} particularly simple.
When $\ell \geq 0$, let $G^{(\ell)}$ denote $\ell$-th term of the derived series of $G=G^{(0)}$.

\begin{lemma} \label{lemGT} 
Let $G$ be a group,
and $J_i$ and $K_i$ subgroups such that $J_i^{(\ell)} \subseteq K_i$ for each $i=1,\ldots,n$.
Suppose that there exists a subgroup $A$ such that $J_i \subseteq A$ for each $i$,
and also $(\cap_i K_i) \cap A =1$.
Then $\cap_i J_i$ is solvable with derived length at most $\ell$.
\end{lemma}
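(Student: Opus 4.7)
The plan is to set $J := \cap_i J_i$ and show directly that $J^{(\ell)} = 1$. The only ingredient needed is monotonicity of the derived series: for any two subgroups $H \subseteq K$ one has $H^{(\ell)} \subseteq K^{(\ell)}$, which follows by induction on $\ell$ from the fact that commutators of elements of $H$ lie in $[K,K]=K^{(1)}$. Applying this to the inclusions $J \subseteq J_i$ together with the hypothesis $J_i^{(\ell)} \subseteq K_i$ gives $J^{(\ell)} \subseteq K_i$ for every $i$, and hence
\[
J^{(\ell)} \ \subseteq \ \bigcap_i K_i.
\]

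Next I would use the auxiliary subgroup $A$. Since each $J_i \subseteq A$ we also have $J \subseteq A$, so $J^{(\ell)} \subseteq A$ as well (because $A$ is a subgroup). Combining the two containments with the second hypothesis,
\[
J^{(\ell)} \ \subseteq \ \Bigl(\bigcap_i K_i\Bigr) \cap A \ = \ 1,
\]
which says exactly that $\cap_i J_i$ is solvable of derived length at most $\ell$.

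There is no genuine obstacle here: the statement is essentially a bookkeeping device, and its proof takes two lines once one notices the monotonicity of $(-)^{(\ell)}$. The real work lies elsewhere, namely in producing, in the intended applications, an $A$ and subgroups $J_i$, $K_i$ for which the hypotheses can be verified — this is where Lemma \ref{lemGluck} will enter, supplying subsets $\Delta_i$ whose joint setwise stabilizer is trivial, so that an appropriate intersection of stabilizers meets $A$ trivially. For this reason I would keep the write-up entirely formal and resist strengthening the statement (e.g.\ with normality or finiteness assumptions) that the applications do not require.
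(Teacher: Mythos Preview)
Your proof is correct and is essentially identical to the paper's: the paper displays the single chain
\[
(\cap_i J_i)^{(\ell)} = (\cap_i J_i)^{(\ell)} \cap A \subseteq (\cap_i J_i^{(\ell)}) \cap A \subseteq (\cap_i K_i) \cap A = 1,
\]
which is exactly your argument compressed into one line, using the same monotonicity of the derived series that you spelled out.
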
 
\begin{proof}
We can write
\begin{align*}
(\cap_i J_i)^{(\ell)} & \> = \> 
(\cap_i J_i)^{(\ell)} \cap A \\ & \> \subseteq \> 
(\cap_i J_i^{(\ell)}) \cap A \\ & \> \subseteq \> 
\left( \cap_i \> K_i \right) \cap A \\ & \> = \> 
  1 \> ,
\end{align*} 
from which the proof follows.
\end{proof}

\subsection{The odd case} \label{subsecOdd} 

We prove Theorem \ref{th:mod}(ii).
We follow a fairly standard route: we reduce to $V$ irreducible,
then we handle $V$ quasiprimitive, and the non-quasiprimitive case follows from induction and Gluck's permutation lemma.

Let $V=V_1 \oplus V_2$, and suppose that there exists $v_i \in V_i$ such that $\C_G(v_i)/\C_G(V_i)$ is abelian, for each $i=1,2$.
Let $v=v_1 + v_2 \in V$, so that $\C_G(v)=\C_G(v_1) \cap \C_G(v_2)$.
We can apply Lemma \ref{lemGT} with $n=2$, $\ell=1$, $A=G$, $J_i=\C_G(v_i)$ and $K_i=\C_G(V_i)$ for each $i$.
We conclude that $\C_G(v)$ is abelian, as desired.

We give a neat result when $V$ is quasiprimitive:

\begin{lemma} \label{lemOddQP} 
Let $G$ be a solvable group and let $V$ be a quasiprimitive faithful $G$-module.
If $|V||G|$ is odd, then there exists $v \in V$ such that $\C_G(v)$ is cyclic.
\end{lemma}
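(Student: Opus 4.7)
The plan is to reduce to the semilinear setting and then observe that stabilizers of nonzero vectors in a semilinear group are automatically cyclic. First, since $V$ is quasiprimitive I may reduce to the case where $V$ is irreducible; because $V$ is completely reducible and faithful, this forces $O_p(G)=1$ where $p=\mathrm{char}(V)$, so in particular $p$ does not divide $|\F(G)|$.

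The main step will be to show that $\F(G)$ is abelian. The idea is to combine the Manz--Wolf structure of faithful quasiprimitive solvable modules with the odd-order hypothesis. If $\F(G)$ were non-abelian it would contain a non-trivial extraspecial $q$-subgroup $E$ for some odd prime $q\neq p$, and $G/\F(G)$ would act faithfully and symplectically on $E/\Z(E)$. Quasiprimitivity of $V$ prevents any maximal abelian subgroup of $E$ containing $\Z(E)$ from being $G$-normal, since otherwise the restriction of $V$ to such a subgroup would decompose as a sum of distinct characters, violating homogeneity. I would then argue, using Lemma \ref{lemGluck}(i) applied to the action of $G/\F(G)$ on the set of Lagrangians of $E/\Z(E)$, that such a configuration is incompatible with $|G|$ odd.

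Once $\F(G)$ is abelian, Lemma \ref{lemSemiL} places $G\leq\mathrm{\Gamma}(q^m)$ with $V\cong\mathbb{F}_{q^m}$ and $\F(G)\leq\mathrm{\Gamma}_0(q^m)$. For any nonzero $v\in V$ and any $\lambda\sigma\in\mathrm{\Gamma}(q^m)$ with $\lambda\in\mathbb{F}_{q^m}^\ast$ and $\sigma\in\mathrm{Gal}(\mathbb{F}_{q^m}/\mathbb{F}_q)$, the equation $\lambda\sigma(v)=v$ determines $\lambda=v/\sigma(v)$ uniquely from $\sigma$. Thus the stabilizer of $v$ in $\mathrm{\Gamma}(q^m)$ is isomorphic to $\mathrm{Gal}(\mathbb{F}_{q^m}/\mathbb{F}_q)\cong C_m$, which is cyclic, and so is its subgroup $\C_G(v)$.

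The hard part will be the reduction to $\F(G)$ abelian. A naive count of Lagrangian orbits under $G/\F(G)$ can fail in small symplectic settings (for instance, a subgroup of order $3$ of $\mathrm{Sp}_2(5)$ acts freely on the six Lagrangians of $\mathbb{F}_5^2$), so a more refined argument will be needed---possibly an Espuelas-type regular-orbit input in the coprime case, or a global induction down the Manz--Wolf decomposition.
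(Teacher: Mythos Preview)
Your final paragraph---the semilinear computation showing that the stabilizer of any nonzero $v$ in $\mathrm{\Gamma}(q^m)$ injects into $\mathrm{Gal}(\mathbb{F}_{q^m}/\mathbb{F}_q)\cong C_m$---is correct and is exactly how the paper finishes once $\F(G)$ is known to be abelian.

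The gap is in your middle step: you set out to prove that $\F(G)$ is \emph{always} abelian under the hypotheses, and this is simply false. Take an extraspecial $3$-group $E$ of order $3^5$, and let a cyclic group $C_5\leqslant\mathrm{Sp}_4(3)$ act on $E/\Z(E)\cong\mathbb{F}_3^{\,4}$; since $5\mid 3^4-1$ but $5\nmid 3^2-1$, this action is irreducible, so no Lagrangian (and hence no maximal abelian subgroup of $E$) is $C_5$-invariant. For a suitable odd prime $p$ (e.g.\ $p\equiv 1\pmod 3$) the group $G=E\rtimes C_5$ acts faithfully and irreducibly on $V=\mathbb{F}_p^{\,9}$, and one checks that the only proper nontrivial normal subgroups of $G$ are $\Z(E)$ and $E$, on both of which $V$ restricts homogeneously. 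Thus $V$ is quasiprimitive, $|V||G|$ is odd, yet $\F(G)=E$ is non-abelian. No refinement of your Lagrangian/Gluck argument can rescue this, because the conclusion you are aiming for does not hold.

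What the paper does instead is invoke the result you mention only tentatively at the end: Espuelas's theorem \cite[Lemma~2.1]{Esp91} states that, for $|V||G|$ odd and $V$ quasiprimitive, \emph{either} $G$ has a regular orbit on $V$ \emph{or} $\F(G)$ is cyclic. This is a genuine dichotomy, not a route to ``$\F(G)$ abelian''. In the example above one lands in the first branch, takes $v$ in a regular orbit, and gets $\C_G(v)=1$. Only in the second branch does one pass to Lemma~\ref{lemSemiL} and your semilinear computation. So the fix is not to sharpen your reduction but to replace its target: quote Espuelas, split into the two cases, and you are done.
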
 
\begin{proof}
Since $|V||G|$ is odd, a useful theorem of Espuelas \cite{Esp91}[Lemma 2.1] states that either $G$ has a regular orbit on $V$, or $F=\F(G)$ is cyclic.
In the first case, $\C_G(v)=1$ for some $v \in V$ and the result follows.
In the latter case, we actually show that $\C_G(v)$ is cyclic for all $v \in V \setminus \{0\}$.
We apply Lemma \ref{lemSemiL}, so that $G \leqslant \Gamma(q^m)$, $F \leqslant \mathrm{\Gamma}_0(q^m)$.
In particular, it is easy to see that $F= \mathrm{\Gamma}_0(q^m) \cap G$, so $G/F \cong \mathrm{\Gamma}_0(q^m) G /\mathrm{\Gamma}_0(q^m)$
can be seen as a subgroup of $\mathrm{\Gamma}(q^m)/\mathrm{\Gamma}_0(q^m) \cong C_m$.
 Let $v \in V \setminus \{0\}$,
 and observe that $\C_F(v)=1$, because $F$ is abelian and regular on $V$.
It follows that
$$ {\bf C}_G(v) \cong \frac{\C_G(v)}{F \cap \C_G(v)} 
\cong \frac{F \C_G(v)}{F} \leqslant G/F , $$
and $\C_G(v)$ is cyclic.
\end{proof} 

Even under the conditions of Lemma \ref{lemOddQP}, there is no $v \in V$ such that $\C_G(v) \subseteq \F(G)$ in general.
(For example, take $G \cong C_{31} \rtimes C_3$ as a subgroup of $\GL_3(5)$,
where the centralizers range among the complements of the Fitting subgroup).
Looking for purely structural properties of the centralizers gives a much better result at this stage.

\begin{proof}[Proof of Theorem \ref{th:mod}(ii)]
Let $V$ be non-quasiprimitive, and proceed by induction on $|V|+|G|$.
Choose $N \lhd G$ maximal such that $V_{N} = V_1 \oplus \cdots \oplus V_m$ is not homogeneous.
By \cite[Proposition 0.2]{MW93}, $S=G/N$
faithfully and primitively permutes the $V_i$'s.
We also observe that $\cap_i \> \C_N(V_i) =1$ because $V$ is faithful.

Lemma \ref{lemGluck}(i)
provides a subset of $\left\{ V_{1},\ldots,V_{m}\right\} $
that lies in a regular orbit of $S$ on the power set of $\left\{ V_{1},\ldots,V_{m}\right\} $.
Without loss of generality, we may assume that
$$ Stab_S( \{ V_1, \ldots, V_k \} )=1 $$
for some $1\leq k<m$. 
Set $H_i=\N_G(V_i)$ for each $i$ (note that $H_i \neq G$ by the irreducibility of $V$).
Also $N=\cap_i H_i$, and the $H_i/N$'s all are conjugates by $S$.
Observe that $\C_G(u) \subseteq H_1$ for all $u \in V_1 \setminus \{0\}$,
and that $V_i$ is an irreducible $H_i$-module.
Applying the inductive hypothesis to the action of $H_1$ on $V_1$,
we obtain $v_1 \in V_1$ such that $\C_G(v_1)/\C_G(V_1)$ is abelian.

 The elements of $V_i$ conjugate to $v_1$ form an $H_i$-orbit on $V_i$.
For each $i>1$, pick $v_i \in V_{i}$ conjugate to $v_1$.
 Let $v = v_1+\ldots+v_k -v_{k+1} - \ldots -v_m$.
 Since $|V||G|$ is odd, no $v_i$ is $G$-conjugate to $-v_j$ (not even for $i=j$)
and so $\C_G(v)$ must stabilize $\{ V_1,\ldots,V_k \}$.
From above, we obtain $\C_G(v) \subseteq N$.

To sum up, we have that $\C_G(v_i)/\C_G(V_i)$ are all abelian sections,
$\C_N(v) = \cap_i \C_N(v_i)$ because $N$ does not permute the $V_i$'s, and also $\C_G(v) \subseteq N$.
In particular,
$$ {\bf C}_N(v_i)^{(1)} \subseteq {\bf C}_G(v_i)^{(1)} \subseteq {\bf C}_G(V_i) . $$
We are in the position to apply Lemma \ref{lemGT} with $\ell=1$, $A=N$, $J_i=\C_N(v_i)$ and $K_i=\C_G(V_i)$ for each $i$.
We obtain that $\cap_i \C_N(v_i)=\C_N(v) = \C_G(v)$ is abelian.
This concludes the proof of Theorem \ref{th:mod}(ii).
\end{proof}

\begin{remark} 
Trivially, $V = (\mathbb{F}_3)^2$ and $G=\GL_2(3)$ show that the hypothesis that $|G|$ is odd is necessary in Theorem \ref{th:mod}(ii).
The hypothesis that $|V|$ is odd is also necessary:
it can be checked with the GAP System \cite{GAP} that \texttt{PrimitiveGroup($2^9$,$28$)} provides a bad example
with $V=(\mathbb{F}_2)^9$ and $|G|=9 \, 261$.
\end{remark}

\subsection{The general case} \label{subsecGen}

In what follows, $G$ is always a solvable group and $V$ is a finite completely reducible faithful $G$-module.
Yang \cite[Theorem 3.4]{Yan09} shows that there exists $v \in V$ such that $\C_G(v)$ is contained in a normal subgroup of $G$ of derived length at most $9$.
As stated, this result is best possible (see \cite[Sec. 4]{Yan09}).
On the other hand, it does not really say more than Theorem \ref{th:mod}(i), in the context of Theorem \ref{th:main}.
We use a different approach that looks purely at the structure of $\C_G(v)$.
We say that an orbit $v^G \subseteq V$ is {\itshape good} if the derived length of $\C_G(v)$ is at most $9$.
For an induction argument, we need more than one good orbit,
so we actually prove Theorem \ref{thEven} below.

\begin{theorem} \label{thEven} 
Let $G$ be a solvable group and let $V$ be a finite completely reducible faithful $G$-module.
    There exist at least two good $G$-orbits.
    If there are less than five good $G$-orbits, then the derived length of $G$ is at most $6$.
\end{theorem}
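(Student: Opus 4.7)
The plan is to proceed by induction on $|V|$, following the three-step reduction (completely reducible $\to$ irreducible $\to$ quasiprimitive) used in the proof of Theorem \ref{th:mod}(ii). Suppose first that $V = V_1 \oplus V_2$ with nonzero $G$-invariant summands; faithfulness gives $\C_G(V_1) \cap \C_G(V_2) = 1$, and the inductive hypothesis applied to $G/\C_G(V_i)$ on $V_i$ produces good orbits on each $V_i$. For a good $v_i \in V_i$ one has $\C_G(v_i)^{(9)} \subseteq \C_G(V_i)$, so Lemma \ref{lemGT} with $\ell=9$, $A=G$, $J_i=\C_G(v_i)$, $K_i=\C_G(V_i)$ shows that $v = v_1+v_2$ is good on $V$. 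Fixing $v_2$ and varying $v_1$ across distinct good $G$-orbits of $V_1$ yields distinct good $G$-orbits on $V$ (distinguishable by their projections to $V_1$), so if $V$ has fewer than five good orbits then so do both $V_i$; the inductive hypothesis then forces $\mathrm{dl}(G/\C_G(V_i)) \leq 6$ for $i = 1, 2$, and since $G$ embeds diagonally in $G/\C_G(V_1) \times G/\C_G(V_2)$ we conclude $\mathrm{dl}(G) \leq 6$.

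For $V$ irreducible and non-quasiprimitive I would follow the setup from the proof of Theorem \ref{th:mod}(ii): pick $N \lhd G$ maximal so that $V_N = V_1 \oplus \cdots \oplus V_m$ is not homogeneous, making $S = G/N$ act primitively on $\Omega = \{V_1,\ldots,V_m\}$. Each $V_i$ is irreducible for $H_i = \N_G(V_i)$, $\bigcap_i \C_N(V_i) = 1$, and the inductive hypothesis supplies good $H_i$-orbits. When $m \geq 10$, Lemma \ref{lemGluck}(ii) provides at least eight regular $S$-orbits on $\mathcal{P}(\Omega)$, including one with $\Delta$ not $S$-conjugate to $\Omega\setminus\Delta$; for each such $\Delta$ I would form $v = \sum_{V_i \in \Delta} v_i^{+} + \sum_{V_i \notin \Delta} v_i^{-}$ with $v_i^{\pm}$ in two distinct good $H_1$-orbits (transported by $S$ to the other $V_i$'s), so that $\C_G(v)$ fixes $\Delta$, lies in $N$, and Lemma \ref{lemGT} with $A = N$, $K_i = \C_G(V_i)$, $\ell = 9$ then delivers $\C_G(v)^{(9)} = 1$. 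Different regular $S$-orbits on $\mathcal{P}(\Omega)$ yield different good $G$-orbits on $V$, producing at least eight and hence at least five. For small $m$ I would use Lemma \ref{lemGluck}(iii) to partition $\Omega$ into $k \leq 4$ blocks with trivial common stabilizer and label each block by a distinct good $H_1$-orbit (where available), again forcing $\C_G(v)$ into $N$.

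For $V$ quasiprimitive: if $\F(G)$ is abelian, Lemma \ref{lemSemiL} embeds $G$ in $\Gamma(q^m)$, whose derived length is at most $3 \leq 6$, so $\mathrm{dl}(G) \leq 6$ holds and any positive count of good orbits suffices. If $\F(G)$ is nonabelian, the Manz--Wolf description of quasiprimitive solvable linear groups restricts $G$ to a short list of structural possibilities, and applying the orbit-counting machinery of \cite{Yan09} (starting from Theorem~3.4 and its supporting analysis) supplies, on each branch of the case split, either at least five good orbits or a direct bound $\mathrm{dl}(G) \leq 6$.

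The main obstacle will be the small-$m$ branch of the non-quasiprimitive reduction: the inductive hypothesis on $V_1$ guarantees only two good $H_1$-orbits, whereas a Lemma \ref{lemGluck}(iii) partition may have up to four blocks, each in principle requiring a distinct label. I would address this by a dichotomy on the number of good $H_1$-orbits: when $V_1$ has at least five good orbits, four distinct labels are available and the construction goes through; otherwise the inductive hypothesis yields $\mathrm{dl}(H_1/\C_G(V_1)) \leq 6$, and one has to leverage the bounded derived length of $S$ (a primitive solvable permutation group of small degree) together with the embedding of $N$ into $\prod_i H_i/\C_G(V_i)$ to bound $\mathrm{dl}(G) \leq 6$. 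This coupling of orbit count and derived-length control is the most delicate point of the proof.
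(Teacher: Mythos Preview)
Your reduction to the irreducible case and the large-$m$ branch essentially match the paper's, but there is a genuine gap in your small-$m$ endgame. When $H_1/\C_G(V_1)$ has fewer than five good orbits on $V_1$, you propose to combine $dl(H_1/\C_G(V_1)) \leq 6$ (from induction), the embedding $N \hookrightarrow \prod_i H_i/\C_G(V_i)$, and the bounded derived length of $S$ to conclude $dl(G) \leq 6$. But the embedding only yields $dl(N) \leq 6$, and since $G/N \cong S$ you get at best $dl(G) \leq 6 + dl(S)$; for a primitive solvable $S$ of degree $\leq 9$ one may have $dl(S)$ as large as $5$, so $dl(G) \leq 6$ simply does not follow. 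The same difficulty lurks in your quasiprimitive nonabelian-Fitting branch: Lemma~\ref{lemEvenQuasiprimitive} only says that $dl(G) \geq 10$ forces five regular orbits, so ``fewer than five good orbits'' yields $dl(G) \leq 9$, not $\leq 6$.

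The paper closes this by changing the target, and the tool you are missing is Lemma~\ref{lemR}. It first disposes of the case $dl(G) \leq 9$ globally: then every orbit is good, so ``fewer than five good orbits'' means $r \leq 4$ orbits in total, and Lemma~\ref{lemR} gives $dl(G) \leq 6$ directly. For the remaining case $dl(G) \geq 10$, the paper does not try to bound $dl(G)$ at all; it shows instead that there are always at least five good $G$-orbits. Now the wreath bound $dl(G) \leq dl(H) + dl(S)$ (with $H = H_1/\C_G(V_1)$) becomes a contradiction device: in the residual sub-cases ($s=2$, or $s\in\{3,4\}$ with $m \leq 5$) one gets $dl(H) + dl(S) \leq 9$ from Lemma~\ref{lemR} applied to $H$ and the small degree of $S$, contradicting $dl(G) \geq 10$. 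This initial dichotomy on $dl(G)$, together with Lemma~\ref{lemR}, is the missing idea.
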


We remark that in the last case every $G$-orbit is a good orbit, so there are less than five $G$-orbits in total.
It is sufficient to prove Theorem \ref{thEven} when $V$ is irreducible, but a little more care is necessary.
If $V=V_1 \oplus V_2$, we apply Lemma \ref{lemGT} as in the previous section, with $\ell=9$ and $A=G$.
If there are $a \geq 2$ good orbits of $G/\C_G(V_1)$ on $V_1$, and $b \geq 2$ good orbits of $G/\C_G(V_2)$ on $V_2$,
we obtain at least $ab$ good orbits of $G$ on $V$.
Now $ab \geq 5$, except when $(a,b)=(2,2)$.
In this case, by induction, the derived length of $G/\C_G(V_i)$ is at most $6$,
and it is easy to see that $G$ is a subgroup of $G/\C_G(V_1) \times G/\C_G(V_2)$.
So $G$ has derived length at most $6$, 	every $G$-orbit on $V$ is a good orbit, and it is clear that we can assume that $V$ is irreducible.

Let $r$ be the number of $G$-orbits on $V$
(so that $r \geq 2$ when $V$ is nontrivial).
Let $dl(G)$ denote the derived length of $G$.
We report two Yang's lemmas:

\begin{lemma}[Theorem 3.1 in \cite{Yan09}] \label{lemEvenQuasiprimitive}
If $V$ is quasiprimitive, and $dl(G) \geq 10$, then there exist at least five regular $G$-orbits.
\end{lemma}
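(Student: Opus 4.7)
The plan is to combine the classical structure theorem for faithful quasiprimitive solvable linear groups with an orbit-counting estimate. First I would invoke the standard decomposition (as in \cite{MW93}, Chapter~2, building on Lemma~\ref{lemSemiL} and its relatives): $F := \F(G)$ is nilpotent of class at most $2$, its centre $Z(F)$ is cyclic, and $F/Z(F) = \prod_{i} F_i/Z_i$, where each $F_i$ is extraspecial or of symplectic type of order $p_i^{1+2a_i}$. The module $V$ decomposes compatibly as a tensor product, and $G/F$ embeds into $\prod_i \mathrm{Sp}(2a_i, p_i)$, up to an outer cyclic Galois factor coming from the field of definition. Note that the pure semilinear case $G \leqslant \mathrm{\Gamma}(q^m)$ cannot occur under our hypothesis, since there $dl(G) \leq 2 < 10$.

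Next I would bound $dl(G)$ in terms of these structural parameters. Since $dl(F) \leq 2$ and $G/F$ sits in a direct product of symplectic groups and cyclic Galois pieces, one obtains an explicit bound of the shape $dl(G) \leq 2 + \max_i dl(\mathrm{Sp}(2a_i, p_i)) + O(\log m)$, where $m$ is the size of the largest Galois factor. Imposing $dl(G) \geq 10$ thus forces some $\mathrm{Sp}(2a_i, p_i)$ to have substantial derived length (so $a_i$ or $p_i$ must be large), or the Galois index $m$ must be large. In each such regime, $|V|$ dominates $|G|$ by a wide margin once the parameters are written out.

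To count regular orbits I would use
\[
|\{v \in V : \C_G(v) = 1\}| \> = \> |V| - \Bigl| \bigcup_{1 \ne x \in G} \C_V(x) \Bigr| \> \geq \> |V| - \sum_{1 \ne x \in G} |\C_V(x)|,
\]
and then exploit that, in the quasiprimitive setting, $Z(F)$ acts fixed-point-freely on $V \setminus \{0\}$ and each nontrivial $x \in G$ has $|\C_V(x)| \leq |V|^{1/q}$ for $q$ the smallest prime order of an element coset-representing $x$ (Hartley--Turull style estimates). Substituting the lower bounds on $|V|/|G|$ from the previous step yields $|V| - \sum_{1 \ne x} |\C_V(x)| \geq 5|G|$, hence at least five regular $G$-orbits.

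The main obstacle will be the case analysis in the second and third steps, particularly the small-prime cases $p_i \in \{2,3\}$ where the symplectic groups contribute little to the derived length relative to their order, and mixed-characteristic decompositions with one tiny factor and one large factor. The constant $5$ is tight enough that the bookkeeping has to be executed carefully, but no new conceptual obstacle arises once the structure theorem and the fixed-point estimates are in place.
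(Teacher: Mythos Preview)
The paper does not prove this lemma; it is imported wholesale from \cite[Theorem~3.1]{Yan09}. Your two-step plan---structure theory of quasiprimitive solvable modules followed by an inclusion--exclusion count of regular vectors---is exactly Yang's strategy, so at the level of architecture there is nothing to compare.

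Where your sketch diverges from a working proof is the fixed-point estimate. The claim $|\C_V(x)| \leq |V|^{1/q}$ for $x$ of prime order $q$ is not available in this generality: it holds for elements of $F \setminus Z(F)$ (via the standard extraspecial computation giving $\dim \C_V(x) = (\dim V)/p$) and in the pure semilinear case, but for elements of $G \setminus F$ acting through the symplectic factor the bound on $\dim \C_V(x)/\dim V$ is typically only around $1/2$ or weaker, not $1/q$. With exponents of that size the inequality $|V| - |G|\cdot|V|^{c} \geq 5|G|$ no longer follows from a crude comparison of $|V|$ and $|G|$, and this is precisely why Yang's proof is a substantial case analysis rather than a single estimate. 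Your final paragraph anticipates ``bookkeeping'', but underestimates it: the case analysis is where the theorem lives. A smaller point: $G/F$ lands in \emph{solvable} subgroups of $\prod_i \mathrm{Sp}(2a_i,p_i)$, so your derived-length inequality should invoke bounds for solvable linear groups, not $dl(\mathrm{Sp}(2a_i,p_i))$ itself; and the Galois factor is cyclic, contributing at most $1$ to $dl(G)$, not $O(\log m)$.
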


\begin{lemma}[Lemma 2.5 in \cite{Yan09}] \label{lemR}
If $r \leq 2$, then $dl(G) \leq 4$.
If $r \leq 4$, then $dl(G) \leq 6$.
\end{lemma}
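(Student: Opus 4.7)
The plan is a three-stage reduction: first to irreducible $V$, then to quasiprimitive $V$, and finally an appeal to the classification of solvable quasiprimitive linear groups.

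For Stage 1, note that if $V=V_1\oplus V_2$ is a proper $G$-decomposition with both $V_i\neq 0$, then the $G$-equivariant projections $\pi_i\colon V\to V_i$ give $r\geq r_1 r_2$, where $r_i$ is the $G$-orbit count on $V_i$: the preimage in $V$ of any pair $(\mathcal{O}_1,\mathcal{O}_2)$ of orbits on the factors is a nonempty $G$-invariant subset, and the preimages are disjoint for distinct pairs. So $r\leq 2$ forces $V$ irreducible; if $r\leq 4$ and $V$ is reducible we must have $r_1=r_2=2$, and by induction on $|V|$ the $r=2$ case gives $dl(G/\C_G(V_i))\leq 4$. Since $V$ is faithful and completely reducible, $G$ embeds into $(G/\C_G(V_1))\times (G/\C_G(V_2))$, so $dl(G)\leq 4\leq 6$.

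For Stage 2, assume $V$ irreducible. If $V$ is not quasiprimitive, pick $N\lhd G$ maximal such that $V_N=V_1\oplus\cdots\oplus V_m$ is non-homogeneous; then $S=G/N$ permutes the $V_i$'s faithfully and primitively, as in the proof of Theorem \ref{th:mod}(ii). Writing $H=\mathrm{Stab}_G(V_1)$, the $H$-orbits on $V_1$ coincide with the intersections $V_1\cap v^G$ for $v\in V_1$, so $r_H(V_1)\leq r\leq 4$, and induction on $|V|+|G|$ bounds $dl(H/\C_H(V_1))$ (and hence $dl(N)$, since $N$ embeds diagonally into $\prod_i N/\C_N(V_i)$, all these quotients being $G$-conjugate). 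On the other hand, the $G$-orbits on $V$ dominate the $S$-orbits on $(V_1/N)\times\cdots\times(V_m/N)$, so $r\leq 4$ combined with Gluck's permutation lemma (Lemma \ref{lemGluck}(ii)) forces $m$ to be bounded by a small constant, hence $dl(S)$ is small. Combining in the extension $1\to N\to G\to S\to 1$ yields the required bound, provided the inductive hypothesis is strengthened to control Fitting length simultaneously with derived length so that the combined bound is sharp. Stage 3 handles $V$ irreducible and quasiprimitive via the Fong--Huppert structure theorem: when $r\leq 2$, Huppert's classification of solvable groups transitive on $V\setminus\{0\}$ gives either $G\leq \Gamma L(1,V)$ (derived length $\leq 2$) or $G$ in a short explicit list of small exceptional subgroups of $GL(2,3),GL(2,5),GL(2,7),GL(2,11),GL(2,23)$ and a few higher-dimensional cases, all of derived length $\leq 4$; for $r\leq 4$ the list is mildly enlarged but the same direct inspection yields $dl(G)\leq 6$.

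The main obstacle is the bookkeeping in Stage 2: the naive extension inequality $dl(G)\leq dl(N)+dl(S)$ would deliver $6+dl(S)$ when $r\leq 4$, which is too weak. One must refine the induction, either by tracking the Fitting length alongside the derived length so that the extension can be analyzed layer-by-layer, or by exploiting that a small $r$ forces the action of $S$ on $N$ to be sufficiently trivial that derived length behaves almost additively in a controlled manner. Stage 3 is then a routine verification once the classical classification is invoked, and Stage 1 is elementary.
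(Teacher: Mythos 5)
The paper does not actually prove this lemma: it is quoted from Yang (Lemma 2.5 of \cite{Yan09}), with only the remark that lossy versions follow from \cite[Corollary 1.10]{MW93} and \cite[Theorem 1.2]{Fou69}. So your argument has to stand on its own, and as written it has two genuine gaps. The first is in Stage 2, and you name it yourself: passing only the information $r_H(V_1)\leq r\leq 4$ to the block stabilizer gives $dl(H/\C_H(V_1))\leq 6$ by induction, and then $dl(G)\leq dl(N)+dl(S)$ overshoots the target. Neither of your proposed repairs (tracking Fitting length alongside derived length, or arguing that $S$ acts ``sufficiently trivially'' on $N$) is carried out, and neither is the natural fix. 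What actually rescues the imprimitive case is a sharper orbit count: if $t$ is the number of $\N_G(V_1)$-orbits on $V_1\setminus\{0\}$, then the $G$-orbits on vectors of support size $1$, of support sizes $2,\dots,m-1$, and of full support (the latter refined by the multiset of block-types, which is a $G$-invariant) give
$$ r \;\geq\; 1 + t + (m-2) + \binom{m+t-1}{m} , $$
so $r\leq 4$ forces $t=1$ and $m\leq 3$, while $r\leq 2$ already forces quasiprimitivity. With $t=1$ the block stabilizer is transitive on $V_1\setminus\{0\}$, Huppert's theorem gives derived length at most $4$ there, and now the naive inequality $dl(G)\leq 4+dl(S)\leq 4+2=6$ does suffice. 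Your bound $r_H(V_1)\leq r$ discards exactly the multiplicative structure of the orbits that makes this work.

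The second gap is Stage 3 for $3\leq r\leq 4$. For $r=2$ the appeal to Huppert's classification of solvable groups transitive on $V\setminus\{0\}$ is correct and yields $dl(G)\leq 4$ (the worst exceptional case being $\GL_2(3)$, of derived length exactly $4$). But for three or four orbits there is no ``mildly enlarged list'' available to ``the same direct inspection'': Foulser's classification \cite{Fou69} of solvable primitive permutation groups of rank at most $3$ (i.e.\ $r\leq 3$) is itself a substantial theorem, and the case $r=4$ is not covered by any such list at all. This quasiprimitive case is the real content of Yang's lemma; it requires either Foulser-type classification work or orbit-counting estimates for quasiprimitive modules (few orbits bounds the extraspecial parameter $e$ of $\F(G)$, which in turn bounds $dl(G/\F(G))$). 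Your sketch replaces this step with an assertion, so the second half of the lemma is not proved.
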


It can be checked with the GAP System \cite{GAP} that there is a subgroup $G$ of $\GL_6(2)$ satisfying $r=3$ and $dl(G)=6$. 
We note that the proofs of these lemmas appear a bit tedious only if one cares about constants:
lossy versions follow immediately from \cite[Corollary 1.10]{MW93} and \cite[Theorem 1.2]{Fou69}, respectively.

\begin{proof}[Proof of Theorem \ref{thEven}]
We work by induction on $|V|+|G|$.
If $dl(G) \leq 9$, then every $G$-orbit is a good orbit, and so we are done by Lemma \ref{lemR}.
Let $dl(G) \geq 10$.
By Lemma \ref{lemEvenQuasiprimitive}, we can also suppose that $V$ is not quasiprimitive.
We will actually show that in the remaining cases there are at least five good $G$-orbits.

There exists $N \lhd G$ such that $V_N = V_1 \oplus \cdots \oplus V_m$ for $m \geq 2$ homogeneous components $V_i$'s of $V_N$;
by choosing $N$ maximal such, then $S=G/N$ primitively permutes the $V_i$'s. 
If $H=\N_G(V_1)/\C_G(V_1)$,
then $H$ acts faithfully and irreducibly on $V_1$ and $G$ is isomorphic to a subgroup of $H \wr_m S$ by the imprimitive embedding theorem.
(Notice that $H$ is faithful by definition, unlike $H_1$ in the previous section).

For a moment, suppose that we have an element $v=v_1+\cdots+v_m \in V$ such that $dl(\C_G(v_i)/\C_G(V_i)) \leq 9$ for each $i$,
$\C_N(v) = \cap_i \C_N(v_i)$, and also that $\C_G(v) \subseteq N$.
Then $\C_N(v_i)^{(9)} \subseteq \C_G(v_i)^{(9)} \subseteq \C_G(V_i)$.
So we are in the position to apply Lemma \ref{lemGT} with $\ell=9$, $A=N$, $J_i=\C_N(v_i)$ and $K_i=\C_G(V_i)$ for each $i$.
We obtain that $\cap_i \C_N(v_i)=\C_N(v) = \C_G(v)$ has derived length at most $9$, i.e. $v$ lies in a good $G$-orbit.

It rests to find five good elements which are not $G$-conjugates. This is related to finding a sufficient number of good $H$-orbits on $V_1$.
If $x_1,y_1 \in V_1$ are in distinct $H$-orbits and $x_i,y_i \in V_i$ are their conjugates,
then the centralizer in $G$ of $v=x_1+\cdots+x_j+y_{j+1}+\cdots+y_m$ must stabilize $\Delta=\{V_1,\ldots,V_j\}$,
if $\Delta$ is not $S$-conjugate to its complement.

In the last part, we do not write all the details and we refer the reader to \cite[Theorem 4.6]{MW04} because the argument is the same.
If $m \geq 10$, then by induction we have $s \geq 2$ good $H$-orbits on $V_1$. We can effectively use them by Lemma \ref{lemGluck}(ii),
and so we get at least $s(s-1)$ good $G$-orbits on $V$.
Now $s(s-1) \geq 5$, except when $s=2$.
Even in this case, we can construct at least five good $G$-orbits on $V$, using Lemma \ref{lemGluck}(iii).
Finally, suppose $m \leq 9$. We have
\begin{equation} \label{eqFinal}
10 \leq dl(G) \leq dl(H) +dl(S) .
\end{equation}
Let $s \geq 2$ be the number of good $H$-orbits on $V_1$.
Using Lemma \ref{lemGluck}(iii) to construct good $G$-orbits, we can easily assume that we are in one of two situations:
\begin{itemize}
\item $s=2$. By Lemma \ref{lemR} we have $dl(H) \leq 4$, while it is easy to see that $dl(S) \leq 5$;
\item $s \in \{3,4\}$ and $m \leq 5$.  By Lemma \ref{lemR} we have $dl(H) \leq 6$, while it is easy to see that $dl(S) \leq 3$;
\end{itemize}
In both cases, we obtain a contradiction to (\ref{eqFinal}).
This concludes the proof of Theorem \ref{thEven} and so of Theorem \ref{th:mod}(i).
\end{proof}

\begin{remark} \label{remSharp} 
As we have said in the Introduction, we believe that the constant $9$ in Theorem \ref{th:main} is not the best possible,
and we are not aware of examples where the derived length of $G_{x,y}$ always exceeds $4$.
The bottleneck in Moret\'o and Wolf's method is because Lemma \ref{lemGluck} is less effective in the even case,
which forces to formulate a statement as Theorem \ref{thEven}.
After this, we observe that Lemma \ref{lemEvenQuasiprimitive} does not really play a role in pinning down the constant,
Lemma \ref{lemR} is sharp, and we come very close to equality in (\ref{eqFinal}).
\end{remark}

 More recently, if $G$ is a primitive group with solvable stabilizer,
 Burness \cite{Bur21} has proved that there always exist five points such that their pointwise stabilizer is trivial.
This result and Theorem \ref{th:main} suggest the following question:
 
 \begin{question}
 Let $G \leqslant \Sym(\Omega)$ be a primitive group with a solvable stabilizer.
 Do there exist $x,y \in \Omega$ such that the derived length of $G_{x,y}$ is bounded by an absolute constant?
 \end{question}

\vspace{0.1cm}


\begin{thebibliography}{1} 

\bibitem{Bur21} T.C. Burness,
\textit{Base sizes for primitive groups with soluble stabilisers},
Algebra and Number Theory \textbf{15} (2021), 1755-1807.

\bibitem{CHMN15} J. Cossey, Z. Halasi, A. Mar\'oti, H.N. Nguyen,
\textit{On a conjecture of Gluck},
Mathematische Zeitschrift \textbf{279} (2015), 1067-1080.

\bibitem{Dol08} S. Dolfi,
\textit{Large orbits in coprime actions of solvable groups},
Transactions of the American Mathematical Society \textbf{360} (2008), 135-152. 

\bibitem{DJ07} S. Dolfi, E. Jabara,
\textit{Large character degrees of solvable groups with abelian Sylow $2$-subgroups},
Journal of Algebra \textbf{313} (2007), 687-694.

\bibitem{Esp91} A. Espuelas,
\textit{Large character degrees of groups of odd order},
Illinois Journal of Mathematics \textbf{35} (1991), 499-505. 

\bibitem{Fou69} D. Foulser,
\textit{Solvable primitive permutation groups of low rank},
Transactions of the American Mathematical Society \textbf{143} (1969), 1-54. 

\bibitem{GAP} The GAP Group,
\textit{Groups, Algorithms, and Programming}, Version 4.11.1.

\bibitem{Glu85} D. Gluck,
\textit{The largest irreducible character degree of a finite group},
Canadian Journal of Mathematics \textbf{37} (1985), 442-451.

\bibitem{MW93} O. Manz, T.R. Wolf,
\textit{Representations of Solvable Groups},
Cambridge University Press (1993).

\bibitem{MBR19} H. Meng, A. Ballester-Bolinches, R. Esteban-Romero,
\textit{On large orbits of subgroups of linear groups},
Transactions of the American Mathematical Society \textbf{372} (2019), 2589-2612.

\bibitem{MW04} A. Moret\'o, T.R. Wolf,
\textit{Orbit sizes, character degrees and Sylow subgroups},
Advances in Mathematics \textbf{184} (2004), 18-36. 

\bibitem{Pal82} P.P. P\'alfy,
\textit{A polynomial bound for the orders of primitive solvable groups},
Journal of Algebra \textbf{77} (1982), 127-137.

\bibitem{Ser96} A. Seress,
\textit{The minimal base size of primitive solvable permutation groups},
Journal of the London Mathematical Society \textbf{53} (1996), 243-255. 

\bibitem{Wol82} T.R. Wolf,
\textit{Solvable and nilpotent subgroups of $\GL(n,q^m)$},
Canadian Journal of Mathematics \textbf{34} (1982), 1097-1111.

\bibitem{Yan09} Y. Yang,
\textit{Orbits of the actions of finite solvable groups},
Journal of Algebra \textbf{321} (2009), 2012-2021.

\bibitem{Yan20} Y. Yang,
\textit{Arithmetical conditions of orbit sizes of linear groups of odd order},
Israel Journal of Mathematics \textbf{237} (2020), 1-14.


   \end{thebibliography}
\end{document}